\newtheorem{theorem}{Theorem}[section]
\newtheorem{lemma}[theorem]{Lemma}
\theoremstyle{definition}
\newtheorem{corollary}[theorem]{Corollary}
\newtheorem{prop}[theorem]{Proposition}
\theoremstyle{remark}
\numberwithin{equation}{section}
\newtheorem*{T1}{Theorem 1}
\newtheorem*{C2}{Corollary 2}
\theoremstyle{remark}
\newcommand{\Ext}{\mathrm{Ext}}
\newcommand{\Hom}{\mathrm{Hom}}
\title{The second cohomology of simple $SL_2$-modules}
\author{David I. Stewart\\Department of Mathematics, Imperial College, London, SW7 2AZ, UK\\davis.stewart06@imperial.ac.uk}
\date{April 9, 2009 and, in revised form, June 29, 2009.}
\begin{document}
\maketitle
\begin{abstract}Let $G$ be the simple algebraic group $SL_2$ defined over an algebraically closed field $K$ of characteristic $p>0$. In this paper, we compute the second cohomology of all irreducible representations of $G$.\footnote[1]{This paper was prepared towards the author's PhD qualification under the supervision of Prof. M. W. Liebeck, with financial support from the EPSRC. We would like to thank Prof. Liebeck for his help in producing this paper. Additional thanks are due to the anonymous referee who made very helpful suggestions for improvements to the paper.}\end{abstract}
\section{Introduction}
Let $G=SL_2$ defined over an algebraically closed field $K$ of characteristic $p>0$. Fix a maximal torus $T$, contained in a Borel subgroup $B$ of $G$. Recall that the weight lattice $X(T)$ of $T$ can be identified with $\mathbb Z$, while the choice dominant weights of $T$ associated with the choice of $B$ can be identified with $\mathbb Z^+$. Thus for each positive integer $r$ there is an irreducible module $L(r)$ of highest weight $r$ . Let $V$ be a $G$-module. As $G$ is defined over $\mathbb F_p$ we have the notion of the $d$th Frobenius map which raises each matrix entry in $G$ to the power $p^d$. When composed with the representation $G\to GL(V)$, this induces the twist $V^{[d]}$ of $V$.

\begin{T1} Let $V=L(r)^{[d]}$ be any Frobenius twist (possibly trivial) of the irreducible $G$-module $L(r)$ with highest weight $r$ where $r$ is one of \begin{align*}&2p\\
&2p^2-2p-2\ (p>2) \\
&2p-2+(2p-2)p^{e}\ (e>1)\end{align*}
Then $H^2(G,V)=K$. For all other irreducible $G$-modules $V$, $H^2(G,V)=0$.
 \end{T1}

But perhaps more strikingly, combining Theorem 1 and Proposition \ref{ext1} below yields
\begin{C2}Let $V=L(r)^{[d]}$ be an irreducible $G$-module. Assume that either $p>2$ or $r\neq 2p$. Then $H^2(G,V)\neq 0$ only if there exists some irreducible module $W$ with $H^1(G,W)\neq 0$ and $\Ext^1(W,V)\neq 0.$
\end{C2}

Notice too that the analogous result holds for $H^1(G,V)$; that is, $H^1(G,V)\neq 0$ for an irreducible $G$-module $V$ if and only if $\Ext^1(W,V)\neq 0$ for some irreducible module $W$ with $H^0(G,W)\neq 0$; that is, $H^1(G,V)\neq 0$ if and only if $\Ext^1(K,V)\neq 0$.

After the submission of this paper, we were made aware of the existence of a more general result due to A. Parker \cite{park}, which gives a recursive formula for all extensions between Weyl modules. This in turn can be used with further recursive formulas given in that paper to generate the above result, in principle. However, this would require some work. Given the usefulness of our result to group theorists, and the simplicity of this proof, we feel that this paper still makes a valuable contribution.

This paper was inspired by the methods of G. McNinch's paper \cite{mcninch} which computes $H^2(G,V)$ for simply connected algebraic groups $G$ acting on modules $V$ with $\dim V\leq p$. The main result of this paper makes a clarification to Theorem A of \cite{mcninch} in the case  $G=SL_2(K)$ and $p=2$. Namely if $V=L(4)=L(1)^{[2]}$ is the second Frobenius twist of the natural representation of $G$ then $\dim V=2\leq 2=p$ and $H^2(G,W)=K\neq 0$ for any Frobenius twist $W=V^{[d]}$ of $V$ $(d\geq 0)$.

The remainder of the paper is dedicated to proving Theorem 1. Along the way we derive the $\Ext^1$  result \ref{ext1} originally due to Cline and rederived in \cite{AJL}.

\section{Proof of Theorem 1}
We begin with a little notation which we keep compatible with \cite{J}:

Let $B$ be a Borel subgroup of a reductive algebraic group $G$, containing a maximal torus $T$ of $G$. Following \cite{J} we choose that Lie$(B)$ contains only negative root spaces of Lie$(G)$. Recall that for each dominant weight $\lambda\in X(T)$ for $G$, the space $H^0(\lambda):=H^0(G/B,\lambda)=\mathrm{Ind}_B^G(\lambda)$ is a $G$-module with highest weight $\lambda$ and with socle $\mathrm{Soc}_G H^0(\lambda)=L(\lambda)$, the irreducible $G$-module of highest weight $\lambda$. We have the Weyl module of highest weight $\lambda$, $V(\lambda)\cong H^0(-w_0\lambda)^*$ where $w_0$ is the longest element in the Weyl group. For $G=SL_2$, we identify $X(T)$ with $\mathbb Z$. In this case  $H^0(i)=V(i)^*$. When $0\leq i<p$, we say that $i$ is a restricted weight and $H^0(i)=L(i)$ the irreducible $G$-module of high weight $i$. Steinberg's tensor product theorem gives a description of all irreducible modules in terms of Frobenius twists of the restricted ones: let $r=r_0+pr_1+p^2r_2+\dots+p^nr_n$ be the $p$-adic expansion of the integer $r$; then $L(r)\cong L(r_0)\otimes L(r_1)^{[1]}\otimes \dots\otimes L(r_n)^{[n]}$. From now on we will drop the $L$ and refer simply to $r$ as the irreducible $G$-module of high weight $r$; we write $L(0)=K$ to avoid confusion. We will often need to refer to specific parts of this tensor product, so we write $r=r_0\otimes r_1^{[1]}\otimes\dots\otimes r_n^{[n]}=r_0\otimes r'^{[1]}=r_0\otimes r_1^{[1]}\otimes r''^{[2]}$, where $r'=r_1\otimes r_2^{[1]}\otimes \dots\otimes r_n^{[n-1]}$ and $r''=r_2\otimes r_3^{[1]}\otimes\dots\otimes r_n^{[n-2]}$.

Recall the dot action of the (affine) Weyl group on the weight lattice $X(T)$: $w\centerdot \lambda=w(\lambda+\rho)-\rho$, where $\rho$ is the half-sum of the positive roots. In case $G=SL_2$, $\rho=1$. Denote by $G_1$ the first Frobenius kernel of $G$; it is an infinitesimal, normal subgroup scheme of $G$ (see for instance, \cite[I.9]{J}). We repeatedly use the linkage principle for $G$ and $G_1$ (see \cite[II.6.17]{J} and \cite[II.9.16]{J}); in the case $G=SL_2$, it is easy to check that this means  that if $\Ext_G^i(r,s)\neq 0$ or $\Ext_{G_1}^i(r,s)\neq 0$ for any $i\geq 0$ then we must have $r_0=s_0$ or $r_0=p-2-s_0$ (for $p=2$ the latter means that $r_0=s_0=0$).

Let $V$ be a $G$-module. As $G_1\triangleleft G$, the cohomology group $M=H^i(G_1,V)$ has the structure of a $G/G_1$-module, and so also of a $G$-module. Since $G_1$ acts trivially on this module, there is a Frobenius untwist $M^{[-1]}$ of $M$.  By \cite[I.9.5]{J}, $G/G_1\cong F_1(G)$, where $F_1$ is the first Frobenius morphism. Thus $G/G_1$ acts on $H^i(G_1,V)$ as $G$ acts on $H^i(G_1,V)^{[-1]}$.

There are two main ingredients in the proof of Theorem 1. The first is the Lyndon-Hochschild-Serre spectral sequence \cite[6.6 (3)]{J} applied to $G_1\triangleleft G$, using the observations in the preceding paragraph.

\begin{prop}\label{lhs} There is for each $G$-module $V$ a spectral sequence \[E_2^{nm}=H^n(G,H^m(G_1,V)^{[-1]})\Rightarrow H^{n+m}(G,V).\]\end{prop}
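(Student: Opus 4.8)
The plan is to obtain the statement as a direct specialisation of the general Lyndon--Hochschild--Serre spectral sequence for a normal subgroup scheme, followed by a relabelling of the $E_2$-page via the identification $G/G_1\cong F_1(G)$ recalled above. First I would apply \cite[6.6 (3)]{J} to the normal subgroup scheme $G_1\triangleleft G$. This yields a first-quadrant spectral sequence
\[E_2^{nm}=H^n\!\left(G/G_1,\,H^m(G_1,V)\right)\Rightarrow H^{n+m}(G,V),\]
in which $H^m(G_1,V)$ carries the $G/G_1$-module structure induced by the conjugation action of $G$ on $G_1$ together with its action on $V$.

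It then remains to rewrite the coefficients on the $E_2$-page. Since $G_1$ acts trivially on $M:=H^m(G_1,V)$, the $G$-module structure on $M$ factors through $G/G_1$; and by \cite[I.9.5]{J} we have $G/G_1\cong F_1(G)$, so $M\cong N^{[1]}$ where $N=M^{[-1]}$, the $G/G_1$-action on $M$ corresponding to the $G$-action on $M^{[-1]}$ pulled back along the first Frobenius morphism — this is exactly the observation made in the paragraph preceding the statement. Hence $H^n(G/G_1,M)=H^n(G,M^{[-1]})=H^n(G,H^m(G_1,V)^{[-1]})$, which is the asserted form of $E_2^{nm}$, while the abutment and all the differentials are untouched by this relabelling.

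I do not expect a genuine obstacle here: the argument is a citation together with bookkeeping. The only points needing attention are that the $G/G_1$-module structure on $H^m(G_1,V)$ appearing in \cite[6.6 (3)]{J} is precisely the one for which the Frobenius untwist above is formed, and that \cite[6.6 (3)]{J} is applicable because $G/G_1$ is an honest algebraic group (indeed $G/G_1\cong F_1(G)\cong G$). Both are already addressed by the preliminary discussion, so the proof reduces to assembling these ingredients.
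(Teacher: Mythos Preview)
Your proposal is correct and matches the paper's approach exactly: the paper states this proposition without proof, simply citing \cite[6.6 (3)]{J} applied to $G_1\triangleleft G$ together with the preceding paragraph's observations about $G/G_1\cong F_1(G)$ and the Frobenius untwist. Your write-up is a faithful elaboration of precisely that.
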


We will always refer by $E_*^{**}$ to terms in the above spectral sequence. We briefly recall the important features of spectral sequences for the unfamiliar reader. The lower subscript refers to the sheet of the spectral sequence. Only the second sheet is explicitly defined in this example. The point $E_2^{nm}$ is defined only for the first quadrant, i.e. for $n,m\geq 0$; for any other $n,m$ we have $E_2^{nm}=0$. On the $i$th sheet, maps in the spectral sequence through the $n,m$th point go \[\dots\to E_i^{n-i,m+i-1}\stackrel{\rho}{\to} E_i^{nm}\stackrel{\sigma}{\to}E_i^{n+i,m-i+1}\to\dots.\] These form a complex, i.e. $\sigma\rho=0$. One then gets the point of the next sheet, $E_{i+1}^{nm}$ as a section of $E_i^{nm}$ by ker $\sigma/$im $\rho$, i.e. the cohomology at that point. If $i$ is big enough, maps go from outside the defined quadrant to a given point and then from that point to outside the defined quadrant again. Thus each point of the spectral sequence evenually stabilises. We denote the stable value by $E_\infty^{nm}$. Finally, one gets \begin{equation}\label{sumup}H^r(G,V)=\bigoplus_{n+m=r}E_\infty^{nm}\end{equation} explaining the notation '$\Rightarrow H^{n+m}(G,V)$'.

The second main ingredient is the following proposition, which is a specialisation of the main theorem from \cite{AJ} to the case $G=SL_2$.

\begin{prop}\label{cfk}Let  $0\leq r<p$ denote a restricted high weight for $G$ in integral notation, where we write $L(0)=K$. Then as $G$-modules, \begin{align*}(i) && H^{2i}(G_1,K)^{[-1]}&\cong H^0(2i),\\
(ii) && H^{2i+1}(G_1,p-2)^{[-1]}&\cong H^0(2i+1),\\
(iii) && H^{i}(G_1,r)^{[-1]}&=0\text{, if }r\neq 0, p-2\end{align*}
\end{prop}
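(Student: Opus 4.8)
The plan is to push the computation down to the first Frobenius kernel $B_1 = T_1 U_1$ of $B$ and then lift it back to $G_1$ across $G/B \cong \mathbb{P}^1$; this is the rank-one shadow of the general argument of \cite{AJ}. The first step is to record the cohomology of $U_1 \cong \mathbb{G}_{a(1)}$ as a $T$-module. With the convention (as in \cite{J}) that $\mathrm{Lie}(B)$ carries the negative root space, we have $U = U_{-\alpha}$ and $\alpha = 2$ in integral notation, and for $p$ odd
\[ H^\bullet(U_1,K) \;\cong\; \Lambda(\lambda_1)\otimes K[\mu_1], \qquad \deg\lambda_1 = 1,\;\deg\mu_1 = 2, \]
where $\lambda_1 = \mathrm{Lie}(U_1)^*$ has $T$-weight $2$ and $\mu_1$ has $T$-weight $2p$; for $p=2$ one has instead $H^\bullet(U_1,K) \cong K[\lambda_1]$ with $\lambda_1$ of degree $1$ and $T$-weight $2$. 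Because $U_1 \triangleleft B_1$ with quotient $T_1 \cong \mu_p$, and $\mu_p$ is linearly reductive, $H^\bullet(B_1,\lambda) = H^\bullet(U_1,K\otimes\lambda)^{T_1}$ is spanned by the monomials in $H^\bullet(U_1,K)$ of $T$-weight $\equiv -\lambda \pmod p$; and since this is a $B/B_1$-module it must, exactly as in the paragraph before Proposition \ref{lhs}, be Frobenius-untwisted before use, which divides every weight by $p$.

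A short congruence check then settles the $B_1$-cohomology for $0 \le \lambda < p$. For $p$ odd: if $\lambda = 0$ only the $\mu_1^j$ survive, so $H^{2j}(B_1,K)^{[-1]}$ is one-dimensional of $T$-weight $2j$ and the odd degrees vanish; if $\lambda = p-2$ only the $\lambda_1\mu_1^j$ survive, so $H^{2j+1}(B_1,p-2)^{[-1]}$ is one-dimensional of $T$-weight $2j+1$ and the even degrees vanish; and if $0 < \lambda < p$ with $\lambda \neq p-2$ nothing survives, so $H^\bullet(B_1,\lambda) = 0$. For $p = 2$ the same check gives $H^j(B_1,K)^{[-1]}$ one-dimensional of $T$-weight $j$ for all $j \geq 0$, while $H^\bullet(B_1,1) = 0$. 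To lift to $G_1$ I would invoke the spectral sequence
\[ E_2^{ij} = R^i\mathrm{Ind}_B^G\bigl(H^j(B_1,M)^{[-1]}\bigr) \;\Rightarrow\; H^{i+j}(G_1,M)^{[-1]} \]
valid for any $G$-module $M$ (see \cite{AJ}, or \cite[II.12]{J}; it realises $G_1$-cohomology as sheaf cohomology on $G/B$). On $G/B = \mathbb{P}^1$ one has $R^i\mathrm{Ind}_B^G = 0$ for $i \geq 2$, while $R^1\mathrm{Ind}_B^G$ annihilates any module whose weights are all $\geq -1$; since every weight met above is $\geq 0$, the $E_2$-page sits in the single column $i = 0$, the sequence degenerates, and $H^n(G_1,M)^{[-1]} = \mathrm{Ind}_B^G\bigl(H^n(B_1,M)^{[-1]}\bigr)$. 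Combining this with $\mathrm{Ind}_B^G(n) = H^0(n)$ for $n \geq 0$ gives (i)--(iii) directly; for $p = 2$ the single identity $H^n(G_1,K)^{[-1]} = H^0(n)$ yields both (i) and (ii) since there $p-2 = 0$, and (iii) is the vanishing for $\lambda = 1$.

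The step I expect to need the most care is this last lift: showing that $H^\bullet(G_1,M)^{[-1]}$ is genuinely $\mathrm{Ind}_B^G$ applied degreewise with no correction. This rests both on the precise form of the $B_1$-to-$G_1$ spectral sequence, including the Frobenius-untwist bookkeeping ($B/B_1 \cong F_1(B)$, $G/G_1 \cong F_1(G)$), and on the happy fact that for $SL_2$ every line bundle that appears has vanishing first cohomology on $\mathbb{P}^1$, which forces the degeneration. A smaller but genuinely fiddly point is fixing the $T$-weights of the generators of $H^\bullet(U_1,K)$ consistently with the sign convention for $\mathrm{Lie}(B)$, and carrying the case $p = 2$ separately throughout --- there $H^\bullet(U_1,K)$ is a polynomial ring on one generator and the even/odd split of (i) versus (ii) disappears. (For $p \geq 3$ one could instead read (i) off from $H^\bullet(G_1,K) \cong K[\mathcal{N}]$, $\mathcal{N}$ the nilpotent cone, but that buys little and needs more machinery.) Everything else is the routine mod-$p$ bookkeeping indicated above, so the most economical write-up may well be to quote the main theorem of \cite{AJ} and specialise it to $SL_2$.
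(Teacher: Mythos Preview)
Your argument is correct. It is in substance the rank-one case of the Andersen--Jantzen proof, which the paper elects to quote rather than reproduce: the paper's proof simply invokes the main formula of \cite{AJ},
\[
H^i(G_1, H^0(w\centerdot 0 + p\lambda))^{[-1]} \cong H^0\bigl(G/B,\; S^{(i-l(w))/2}(\mathfrak{u}^*)\otimes\lambda\bigr),
\]
and then specialises by running through the two Weyl-group elements of $SL_2$ and reading off $S^j(\mathfrak u^*)$ as the one-dimensional $B$-module of weight $2j$. You instead compute $H^\bullet(B_1,\lambda)$ directly from the known cohomology of $\mathbb{G}_{a(1)}$ and lift through the $B_1$-to-$G_1$ spectral sequence, observing that it collapses because $G/B=\mathbb{P}^1$ kills $R^{\geq 1}\mathrm{Ind}_B^G$ on the dominant weights that appear. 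This is precisely the machinery underlying the \cite{AJ} formula, so you are re-deriving rather than citing; your route is more self-contained and makes the rank-one simplifications explicit (one-dimensional $\mathfrak{u}$, immediate degeneration on $\mathbb{P}^1$), while the paper's citation is shorter and avoids the Frobenius-untwist bookkeeping you flag as the delicate step. You recognise this trade-off yourself in your closing sentence, and indeed quoting \cite{AJ} and specialising is exactly what the paper does.
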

\begin{proof} We use the main result of \cite{AJ}: Let $w\in W$ be an element of length $l(w)$ of the Weyl group $W=N_G(T)/T$ of the reductive algebraic group $G$ with Borel subgroup $B$ and maximal torus $T\leq B$. Let $\mathfrak{u}$ denote the Lie algebra of the unipotent radical $U$ of $B$. Then
{\footnotesize \[H^i(G_1,H^0(G/B,w\centerdot 0+p\lambda))^{[-1]}\cong\begin{cases}H^0(G/B,S^{(i-l(w))/2}(\mathfrak{u}^*)\otimes\lambda)\\\hspace{20pt}\text{if }(i-l(w))\text{ is even and } w\centerdot 0+p\lambda\text{ is dominant,}\\0\text{ otherwise}.\end{cases}\]}
where $S^i(\mathfrak{u}^*)$ is the $i$th symmetric power. This theorem is proved under certain restrictions, but it is always true for $G=SL_2$. The result for $p>2$ is \cite[3.7]{AJ} and $p=2$ is \cite[3.10 (3)]{AJ}.

We apply this to $G=SL_2$.  Here, $w$ is either $1$, the identity element, or $s$, the involution, with $l(w)=0$ or $1$ respectively, and with $w\centerdot 0=0$ or $-2$ respectively. Thus the only restricted dominant weights of the form $w\centerdot 0+p\lambda$ are $0$ or $p-2$, giving $(iii)$ by the linkage principle for $G$.  So assume $w\centerdot 0+p\lambda$ is $0$ or $p-2$ and $H^i(G_1,H^0(w\centerdot 0+p\lambda))^{[-1]}\neq 0$. Observe that $\mathfrak u^*$ is a one-dimensional $B$-module on which $T$ acts with high weight $2$, and on which $U$ acts trivially. Thus $\mathfrak u^*$ can be regarded as a character for $T$ of weight $2$. So $S^i(\mathfrak u^*)$ can be regarded as a character for $T$ of high weight $2i$. If $i=2i'$, then $l(w)=0$, $w=1$, and we must take $\lambda=0$ giving $S^{i-l(w)}(\mathfrak u^*)\otimes \lambda=S^{i'}(\mathfrak u^*)$, a character of weight $2i'$. If $i=2i'+1$, then $l(w)=1$, $w=s$, and we must take $\lambda=1$ giving $S^{i-l(w)}(\mathfrak u^*)\otimes \lambda=S^{i'}(\mathfrak u^*)\otimes 1$, a character of weight $2i'+1$.
\end{proof}

From now on, let $G=SL_2$. Let $r=r_0\otimes r'^{[1]}$ and $s=s_0\otimes s'^{[1]}$ denote two irreducible $G$-modules. We first apply the spectral sequence to the module $V=r\otimes s$ to rederive Cline's result on $\Ext_G^1(r,s)$. We do this because it is a straightforward application of our method, and because we need it for the proofs of Theorem 1 and Corollary 2.

Notice that $H^m(G_1,r\otimes s)=H^m(G_1,(r_0\otimes s_0)\otimes r'^{[1]}\otimes s'^{[1]})$ as $G$-modules: it is enough to check this when $m=0$, i.e. $\Hom_{G_1}(K,r\otimes s)=\Hom_{G_1}(K,r_0\otimes s_0)\otimes r'^{[1]}\otimes s'^{[1]}$, which is a consequence of Steinberg's tensor product theorem. Now  \[E_2^{nm}=H^n(G,H^m(G_1,r\otimes s)^{[-1]})=H^n(G,H^m(G_1,r_0\otimes s_0)^{[-1]}\otimes r'\otimes s').\]
First we require the following:

\begin{lemma}\label{steinberg} \[\Ext^i_{G_1}(p-1,p-1)=\begin{cases}K\text{ if }i=0\\0\text{ otherwise.}\end{cases}\]\end{lemma}
\begin{proof} Observe that $p-1$ represents the Steinberg module $St_1$ for $G$ and hence for $G_1$. The lemma now follows as $St_1$ is an injective (hence projective) module for $G_1$. See \cite[II.3.18(4,5)]{J} and \cite[II.11.1 Remark (2)]{J}.\end{proof}

\begin{prop}\label{23page}In the spectral sequence of Proposition \ref{lhs} applied to $V=r\otimes s$, 
\begin{enumerate}
\item if $m$ is even and $E_2^{nm}\neq 0$ then $r_0=s_0$ and $E_2^{n'm'}=0$ for any $n',m'$ with $m'$ odd;
\item if $m$ is odd and $E_2^{nm}\neq 0$ then $s_0=p-2-r_0$ and $E_2^{n'm'}=0$ for any $n',m'$ with $m'$ even.
\item Further when $p=2$, if $m$ is even and $E_2^{nm}\neq 0$ then $r_1=s_1$, whereas if $m$ is odd and $E_2^{nm}\neq 0$ then $r_0=s_0=0$ and $r_1=s_1\pm 1$.\end{enumerate}\end{prop}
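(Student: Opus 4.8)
The plan is to exploit the identity $E_2^{nm}=H^n\bigl(G,\,H^m(G_1,r_0\otimes s_0)^{[-1]}\otimes r'\otimes s'\bigr)$ recorded just above, so that everything is controlled by the $G/G_1$-module $M_m:=H^m(G_1,r_0\otimes s_0)^{[-1]}$, which I would compute first.

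Since $r_0,s_0$ are restricted, $r_0\otimes s_0$ is a tilting module, whose Weyl filtration has factors precisely $V(c)$ for $c=|r_0-s_0|,|r_0-s_0|+2,\dots,r_0+s_0$. Decomposing it into indecomposable tilting summands, a Weyl factor $V(c)$ with $c\le p-1$ gives a summand $T(c)=L(c)$, while a Weyl factor $V(c)$ with $p\le c\le 2p-2$ combines with $V(2p-2-c)$ into one indecomposable summand $T(c)$. Now compute $H^m(G_1,-)^{[-1]}$ of each summand. For $T(c)=L(c)$ with $c\le p-1$, Proposition \ref{cfk} gives that this vanishes unless $c\in\{0,p-2\}$, equalling $H^0(m)$ and living in even degrees if $c=0$ and in odd degrees if $c=p-2$ (for $p=2$, $\{0,p-2\}=\{0\}$, and $H^m(G_1,K)^{[-1]}=H^0(m)$ in every degree). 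For $T(c)$ with $p\le c\le 2p-2$, the crucial point is that $T(c)|_{G_1}$ is $G_1$-projective: it is a direct summand of $St_1\otimes L(c-p+1)$, a tensor product of the $G_1$-projective module $St_1=L(p-1)$ (Lemma \ref{steinberg}) with another module. Hence $H^i(G_1,T(c))=0$ for all $i>0$, while $H^0(G_1,T(c))=(T(c))^{G_1}$ is a $G_1$-trivial $G$-submodule of $T(c)$, hence zero unless it contains the (simple) $G$-socle $L(2p-2-c)$, i.e.\ unless $c=2p-2$, in which case it is $\cong K$. This last case occurs only when $r_0=s_0=p-1$, where it is in any event subsumed by Lemma \ref{steinberg}.

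A short bookkeeping argument --- tracking which of the weights $0$, $p-2$ (and, in the boundary case $r_0=s_0=p-1$, $2p-2$) actually contributes via a tilting summand --- then shows that, for $p>2$, $M_m\neq 0$ precisely when either $m$ is even and $r_0=s_0$, or $m$ is odd and $r_0+s_0=p-2$; since $p$ is odd, these are mutually exclusive. Inserting this into the formula for $E_2^{nm}$ yields (1) and (2): the parity of $m$ forces the stated relation between $r_0,s_0$, and mutual exclusivity forces $E_2^{n'm'}=0$ in the opposite parity. For $p=2$ the restricted weights are just $0,1$ and the analysis collapses: $M_m\neq 0$ forces $r_0=s_0$; moreover $M_m\cong H^0(m)$ for every $m$ if $r_0=s_0=0$, whereas $M_0\cong H^0(0)$ and $M_m=0$ for $m>0$ if $r_0=s_0=1$. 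Thus $m$ odd forces $r_0=s_0=0$, and the relevant $E_2^{nm}$ is $H^n(G,H^0(m)\otimes r'\otimes s')$ (when $r_0=s_0=0$) or $H^n(G,r'\otimes s')$ (when $r_0=s_0=1$, $m=0$). To obtain the condition on $r_1,s_1$ I would pass to composition factors: non-vanishing of such a group forces $\Ext^n_G(L(a),L(b))\neq 0$ for some composition factor $L(a)$ of $V(m)=H^0(m)^*$ (so $a\equiv m\!\pmod 2$) and some composition factor $L(b)$ of $r'\otimes s'\cong(r_1\otimes s_1)\otimes(r''\otimes s'')^{[1]}$; the linkage principle, which for $p=2$ reads $a\equiv b\!\pmod 2$, then forces $r_1\otimes s_1$ to have a weight of the parity of $m$. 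Since $r_1,s_1\in\{0,1\}$, the module $r_1\otimes s_1$ has only even weights when $r_1=s_1$ and only odd weights when $r_1\neq s_1$, giving $r_1=s_1$ for $m$ even and $r_1=s_1\pm 1$ for $m$ odd, which is (3).

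I expect the main obstacle to be the handling of the tilting summands $T(c)$ with $c\ge p$. Concretely, when $r_0+s_0\ge p$ and $r_0+s_0\equiv p\!\pmod 2$, the weight $p$ --- which on its own would contribute to $M_m$ in even degrees --- is a good-filtration factor of $r_0\otimes s_0$, glued inside the tilting summand $T(p)$ to the weight $p-2$, which contributes in odd degrees; the required cancellation is exactly the $G_1$-projectivity of $T(p)$, and without it (1) and (2) would fail. The attendant combinatorics --- which of $0,p-2,2p-2$ survives un-cancelled --- is elementary but must be handled with care near the boundary case $r_0=s_0=p-1$.
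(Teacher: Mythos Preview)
Your argument is correct, but it takes a genuinely different route from the paper's. The paper argues directly from parity and linkage: if $E_2^{nm}\neq 0$ then $\Ext_{G_1}^m(r_0,s_0)\neq 0$, so $r_0=s_0$ or $r_0+s_0=p-2$ by $G_1$-linkage; a quick weight-parity check shows that when $s_0=p-2-r_0$ the tensor product $r_0\otimes s_0$ has no trivial $G_1$-factor (all weights are odd and $<p$), forcing $r_0=s_0$ for $m$ even, and conversely when $r_0=s_0$ the only way a $p-2$ factor can appear is via $r_0=p-1$, which Lemma~\ref{steinberg} rules out for $m>0$. Part (iii) is then the same parity argument you give. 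By contrast, you compute $M_m=H^m(G_1,r_0\otimes s_0)^{[-1]}$ outright by decomposing $r_0\otimes s_0$ into indecomposable tilting summands, observing that the summands $T(c)$ with $c\ge p$ are $G_1$-projective, and reading off the contribution of the remaining small summands $L(0)$, $L(p-2)$ from Proposition~\ref{cfk}. Your approach buys an explicit description of $M_m$ as a $G$-module in every degree (not just a vanishing criterion), at the cost of importing the tilting machinery and the Donkin-type fact that $T(c)$ is a summand of $St_1\otimes L(c-p+1)$; the paper's proof is more self-contained and shorter, needing only weight parity and Lemma~\ref{steinberg}.

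One small imprecision: your sentence ``$M_m\neq 0$ precisely when either $m$ is even and $r_0=s_0$, or $m$ is odd and $r_0+s_0=p-2$'' overstates the ``if'' direction --- for $r_0=s_0=p-1$ one has $M_m=0$ for all $m\ge 1$ (the would-be summand $L(0)$ is absorbed into $T(2p-2)$, which contributes only in degree $0$). This does not affect the proof, since only the ``only if'' direction is used to derive (i)--(ii), but you should phrase the bookkeeping conclusion as an implication rather than an equivalence. Your identification of the ``main obstacle'' --- the cancellation of the $V(p-2)$ contribution inside $T(p)$ when $r_0+s_0\ge p$ --- is exactly right, and is precisely what the paper's parity argument sidesteps.
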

\begin{proof}Assume $E_2^{nm}\neq 0$. Thus $H^m(G_1,r_0\otimes s_0)=\Ext_{G_1}^m(r_0,s_0)\neq 0$ so we have $r_0=s_0$ or $p-2-r_0=s_0$ by the linkage principle for $G_1$.

Firstly, let $p>2$. If $s_0=p-2-r_0$, then all the weights in $r_0\otimes s_0$ are odd and less than $p$. Thus there is no $G_1$-composition factor which is a trivial module. So if $m$ is even and $E_2^{nm}\neq 0$ then by \ref{cfk}, $r_0=s_0$.

On the other hand, If $m$ is odd and $E_2^{nm}\neq 0$ then $r_0\otimes s_0$ must have a $G_1$-composition factor which is $p-2$ by \ref{cfk}. Now if $r_0=s_0$ the weights of $r_0\otimes s_0$ are all even, so if $r_0\otimes s_0$ has a $G_1$-composition factor which is $p-2$, it must have a $G$-composition factor which is $p-2\otimes 1^{[1]}$. Thus we must have $r_0=p-1$. But then $m=0$ by \ref{steinberg}; a contradiction as $m$ is odd. So we must have $s_0=p-2-r_0$. This proves (i) and (ii).

When $p=2$, observe that each $r_i,s_i$ takes the value $0$ or $1$. Assume $E_2^{nm}\neq 0$. Then $H^m(G_1,r_0\otimes s_0)=\Ext_{G_1}^m(r_0,s_0)\neq 0$ and so $r_0=s_0$ by the linkage principle for $G_1$. If $r_0=s_0=1$, then $m=0$ by \ref{steinberg} and $H^m(G_1,r_0\otimes s_0)^{[-1]}=H^0(0)=K$. On the other hand, when $r_0=s_0=0$, $H^m(G_1,r_0\otimes s_0)^{[-1]}=H^0(m)$ by \ref{cfk}. Since $E_2^{nm}=H^n(G,H^0(m)\otimes r'\otimes s')\neq 0$, we require the weights of $H^0(m)\otimes r'\otimes s'$ to be even by the linkage principle. Thus if $m$ is even, $r_1=s_1$ and if $m$ is odd, $r_1=s_1\pm 1$.\end{proof}

\begin{corollary}\label{cor} In the spectral sequence $E_i^{nm}$ applied to $V=r\otimes s$, we have
\begin{align*}E_2^{nm}&=E_3^{nm}\text{ for any } n,m\geq 0\\
E_2^{01}&=E_\infty^{01}\\
H^1(G,V)&=E_2^{10}\oplus E_2^{01}\\
E_2^{20}&=E_\infty^{20}\\
E_2^{11}&=E_\infty^{11}.\end{align*}
\end{corollary}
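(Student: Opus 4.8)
The plan is to derive everything formally from the parity dichotomy of Proposition \ref{23page}. For $V=r\otimes s$ that proposition says the page $E_2^{**}$ is supported either entirely on the even rows $m$ or entirely on the odd rows $m$ (or vanishes identically). First I would observe that the differential $d_2\colon E_2^{nm}\to E_2^{n+2,m-1}$ changes the parity of the row index, so it can never have both source and target nonzero; hence $d_2=0$ identically, and passing to homology at every spot yields $E_3^{nm}=E_2^{nm}$ for all $n,m\ge 0$, which is the first assertion.

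Next I would establish the four stabilisation statements by chasing ranges. All of the spots $(0,1)$, $(1,0)$, $(2,0)$, $(1,1)$ have total degree at most $2$, so every differential $d_r$ with $r\ge 3$ entering or leaving them originates or terminates outside the first quadrant and therefore vanishes; by the first step these spots are thus already stable on the $E_2$ page, provided $d_2$ vanishes on them. At $(1,0)$ even $d_2$ leaves the quadrant; the only $d_2$'s in range at the other three spots are $d_2\colon E_2^{0,1}\to E_2^{2,0}$ (relevant to both $(0,1)$ and $(2,0)$) and $d_2\colon E_2^{1,1}\to E_2^{3,0}$ (relevant to $(1,1)$), and both vanish by the parity observation. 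Hence $E_\infty^{01}=E_2^{01}$, $E_\infty^{10}=E_2^{10}$, $E_\infty^{20}=E_2^{20}$, $E_\infty^{11}=E_2^{11}$, and finally \eqref{sumup} with $r=1$ reads $H^1(G,V)=E_\infty^{10}\oplus E_\infty^{01}=E_2^{10}\oplus E_2^{01}$.

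I do not expect a genuine obstacle: the whole thing is spectral-sequence bookkeeping resting on Proposition \ref{23page}. The one point to be careful about is that the parity argument only disposes of $d_2$ — a differential $d_r\colon E_r^{nm}\to E_r^{n+r,m-r+1}$ changes the parity of $m$ precisely when $r$ is even, so $d_3$ (and $d_5,\dots$) is invisible to it. This is exactly why the stabilisation claims are confined to spots of total degree $\le 2$, where any such odd $d_r$ is out of range anyway; for the $H^2$ arguments later in the paper one will have to treat a $d_3$ directly, but that does not concern this corollary.
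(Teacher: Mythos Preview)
Your argument is correct and is essentially the paper's own: both deduce $E_2=E_3$ from the row-parity dichotomy of Proposition \ref{23page} (forcing $d_2\equiv 0$) and then observe that at the four spots in question every $d_r$ with $r\ge 3$ leaves the first quadrant. One minor wording caution: the blanket justification ``total degree $\le 2$'' is not quite sufficient on its own, since $d_3\colon E_3^{02}\to E_3^{30}$ stays inside the quadrant---but your four listed spots all avoid the exceptional position $(0,2)$, as you yourself acknowledge in the closing paragraph.
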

\begin{proof}
The first statement is clear in light of Proposition \ref{23page} since $E_3^{nm}$ is the cohomology of $E_2^{n-2,m+1}\to E_2^{nm}\to E_2^{n+2,m-1}$ and if the middle term is non-zero, then the outer terms must be zero. The second statement then follows from (\ref{sumup}) above, noting the obvious identity  $E_2^{10}=E_\infty^{10}$.

The remaining statements are now clear. For example the last statement follows since $E_2^{11}=E_3^{11}$ and the maps in $E_3$ are $E_3^{n-3,m+2}\to E_3^{nm}\to E_3^{n+3,m-2}$ so $E_4^{11}$ is the cohomology of $0\to E_3^{11}\to 0$.\end{proof}

Now we are in a position to rederive Cline's result on $\Ext_G^1$ as stated in \cite[3.9]{AJL}. In the statement below, note that by $s_{k+1}\pm 1$ or $p-2-s_k$ we only refer to those values lying between $0$ and $p-1$ inclusive.

\begin{prop}[Cline]\label{ext1} Let $r=r_0+pr_1+\dots+p^nr_n$ and $s=s_0+ps_1+\dots+p^m s_m$ be the $p$-adic expansions of $r$ and $s$. Then $\Ext^1_G(r,s)=K$ if there is some $k$ such that $r_i=s_i$ for $i\neq k,k+1$ and $r_k=p-2-s_k$, $r_{k+1}=s_{k+1}\pm 1$. $\Ext^1_G(r,s)= 0$ otherwise.
\end{prop}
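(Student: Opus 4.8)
The plan is to run the Lyndon--Hochschild--Serre spectral sequence of Proposition~\ref{lhs} on $V=r\otimes s$, exactly as prepared for Corollary~\ref{cor}, and to induct on $\max(r,s)$. Since every $SL_2$-module is self-dual ($-w_0=\mathrm{id}$), $\Ext^1_G(r,s)\cong H^1(G,r\otimes s)$, and Corollary~\ref{cor} reduces this to the two edge terms $E_2^{10}$ and $E_2^{01}$, where $E_2^{nm}=H^n(G,H^m(G_1,r_0\otimes s_0)^{[-1]}\otimes r'\otimes s')$.

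The term $E_2^{10}$ carries the recursion. Since $r_0$ is a simple $G_1$-module, Schur's lemma gives $H^0(G_1,r_0\otimes s_0)=\Hom_{G_1}(r_0,s_0)$, which is the trivial $G/G_1$-module $K$ when $r_0=s_0$ and $0$ otherwise; hence $E_2^{10}\cong\Ext^1_G(r',s')$ when $r_0=s_0$ and $E_2^{10}=0$ when not. The term $E_2^{01}$ vanishes unless $s_0=p-2-r_0$ (Proposition~\ref{23page}); in that case $r_0+s_0=p-2<p$, so $r_0\otimes s_0\cong\bigoplus_j L(p-2-2j)$ is completely reducible, and Proposition~\ref{cfk} annihilates every summand but $L(p-2)$, leaving $H^1(G_1,r_0\otimes s_0)^{[-1]}\cong H^1(G_1,p-2)^{[-1]}\cong H^0(1)=L(1)$. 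Hence $E_2^{01}\cong(L(1)\otimes r'\otimes s')^G=\Hom_G(L(1),r'\otimes s')$.

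The crux is then the identity
\[
\Hom_G(L(1),r'\otimes s')\;\cong\;\begin{cases}K&\text{if }|r_1-s_1|=1\text{ and }r_i=s_i\text{ for all }i\geq2,\\0&\text{otherwise}\end{cases}
\]
(with both sides at most one-dimensional). I would prove it by restriction to $G_1$ followed by $G/G_1$-invariants: using $\Hom_G(\cdot,\cdot)=\Hom_{G_1}(\cdot,\cdot)^{G/G_1}$ and Steinberg's tensor product theorem to strip off the twisted tail $(r''\otimes s'')^{[1]}$, one gets
\[
\Hom_G(L(1),r'\otimes s')\cong\big(\Hom_{G_1}(L(1),r_1\otimes s_1)^{[-1]}\otimes r''\otimes s''\big)^G,\qquad \Hom_{G_1}(L(1),r_1\otimes s_1)\cong\Hom_{G_1}(s_1,L(1)\otimes r_1).
\]
The Clebsch--Gordan rule $L(1)\otimes L(r_1)\cong L(r_1-1)\oplus L(r_1+1)$, valid for $0\leq r_1\leq p-2$ (first summand dropped when $r_1=0$), makes the latter one-dimensional exactly when $s_1=r_1\pm1$; for $r_1=p-1$ the tensor product is instead the indecomposable tilting module $T(p)=L(1)\otimes St_1$, and the self-injectivity of $St_1$ over $G_1$ (Lemma~\ref{steinberg}) gives $\Hom_{G_1}(s_1,T(p))=K$ precisely for $s_1=p-2=r_1-1$. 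Feeding this back, $\Hom_G(L(1),r'\otimes s')\cong(r''\otimes s'')^G=\Hom_G(r'',s'')$, which is $K$ iff $r''=s''$ by Schur's lemma.

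It then remains to assemble the induction. In the base case $r,s<p$ one has $r'=s'=0$, whence $E_2^{10}=H^1(G,K)=0$ and $E_2^{01}$ is $0$ or $(L(1))^G=0$, while the stated combinatorial criterion cannot hold for restricted weights. In the inductive step: if $r_0=s_0$ then $\Ext^1_G(r,s)\cong\Ext^1_G(r',s')$ and the criterion for $(r,s)$ with a special index $k\geq1$ coincides (via the inductive hypothesis) with that for $(r',s')$ with index $k-1$; if $r_0=p-2-s_0\neq s_0$ then $\Ext^1_G(r,s)\cong\Hom_G(L(1),r'\otimes s')$, which by the crux is $K$ exactly when the criterion holds with $k=0$; and if $r_0\notin\{s_0,\,p-2-s_0\}$ both edge terms vanish by linkage and the criterion fails. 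For $p>2$ the values $s_0$ and $p-2-s_0$ are distinct, so these cases are mutually exclusive; for $p=2$ they coincide only at $r_0=s_0=0$, where one checks that a $k=0$ pattern forces $r_1\neq s_1$ whereas any $k\geq1$ pattern forces $r_1=s_1$ (or $r_1=s_1=0$), so the two cannot coexist. I expect the crux identity to be the real work, in particular the exceptional weight $r_1=p-1$ where semisimplicity fails and one must substitute the structure of the tilting module $T(p)$, obtained from the injectivity of the Steinberg module over $G_1$; the $p=2$ bookkeeping, where the parity separation underpinning Proposition~\ref{23page} degenerates, is a lesser nuisance.
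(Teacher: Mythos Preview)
Your proposal is correct and follows the same overall architecture as the paper: use Corollary~\ref{cor} to write $\Ext^1_G(r,s)=E_2^{10}\oplus E_2^{01}$, identify $E_2^{10}\cong\Ext^1_G(r',s')$ when $r_0=s_0$ to carry the induction, and compute $E_2^{01}$ from the single $L(p-2)$ summand of $r_0\otimes(p-2-r_0)$ via Proposition~\ref{cfk}. The only real difference is in the crux computation of $\Hom_G(L(1),r'\otimes s')$: the paper rewrites this as $\Hom_G(r',s'\otimes 1)$ and analyses the $G$-module $s_1\otimes 1\otimes s''^{[1]}$ directly, splitting into the cases $s_1\notin\{0,p-1\}$, $s_1=0$, and $s_1=p-1$, whereas you apply a second $G_1$/$(G/G_1)$ layer to peel off $r''\otimes s''$ and reduce to $\Hom_{G_1}(s_1,L(1)\otimes r_1)$. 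Your route is slightly cleaner (two cases rather than three, and the $G/G_1$-triviality of the one-dimensional $\Hom_{G_1}$ is immediate because the spanning map is already a $G$-map), while the paper's route avoids invoking the $G_1$-injectivity of $St_1$ a second time and the identification $T(p)|_{G_1}\cong Q_1(p-2)$; but these are cosmetic differences within the same strategy.
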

\begin{proof}We remind the reader of the notation $r=r_0\otimes r'^{[1]}$ and $r=r_0\otimes r_1^{[1]}\otimes r''^{[2]}$.

We have that $\Ext^1_G(r,s)=H^1(G,r\otimes s)=E_2^{01}\oplus E_2^{10}$ from the corollary above. Assume $\Ext^1_G(r,s)\neq 0$. By \ref{23page} exactly one of $E_2^{01}$ and $E_2^{10}$ is non-zero. Note that $r\neq s$ by \cite[II.2.12 (1)]{J}. Thus for $p>2$ define $k$ to be the least integer such that $r_k\neq s_k$. When $p=2$ define $k+1$ to be the least integer such that $r_{k+1}\neq s_{k+1}$ (recall $r_0=s_0$ by the linkage principle, so that $k\geq 0$).

Suppose $k>0$, so $r_0=s_0$ and if $p=2$, $r_1=s_1$. Then by \ref{23page}, $E_2^{01}=0$ and so $\Ext_G^1(r,s)=E_2^{10}$. But \[E_2^{10}=H^1(G,H^0(G_1,r_0\otimes s_0)^{[-1]}\otimes r'\otimes s')=\Ext_G^1(r',s'),\] since $H^0(G_1,r_0\otimes s_0)=\Hom_{G_1}(r_0,s_0)=K$ by Schur's lemma as $K$ is algebraically closed and as there is an equivalence of categories of $G_1$-modules and modules over a certain finite dimensional K-algebra. So $\Ext_G^1(r, s)=\Ext_G^1(r', s')$. Continuing to calculate the right hand side inductively, we may assume $k=0$. Now from \ref{23page} again, $r_0=p-2-s_0$ (and if $p=2$, $r_1=s_1\pm 1$),  $E_2^{10}=0$ and so $\Ext_ G^1(r,s)=E_2^{01}$. We now show that $E_2^{01}=K$.

As the highest weight of $r_0\otimes (p-2-r_0)$ is $p-2$, and the $p-2$ weight space is one-dimansional, the module $r_0\otimes (p-2-r_0)$ has a single $p-2$ composition factor. By the linkage principle, no composition factor of $r_0\otimes (p-2-r_0)$ has a non-trivial extension by $p-2$. Thus $p-2$ appears as a direct summand of this tensor product. So by \ref{cfk}
 \[E_2^{01}=H^0(G,1\otimes r'\otimes s')=\Hom_G(r',s'\otimes 1)\] 
and so our assumption requires that  $s'\otimes 1$ contains a simple submodule isomorphic to $r'$. Provided $s_1\neq 0,p-1$,  we have that $s'\otimes 1=(s_1+1)\otimes s''^{[1]}\oplus (s_1-1)\otimes s''^{[1]}$ since $s'\otimes 1$ is self-dual and has exactly two composition factors. So $r'=(s_1\pm 1)\otimes s''^{[1]}$ and $E_2^{01}=K$ as required. If $s_1=0$, then $s'\otimes 1$ is irreducible and again, $r'=(s_1+1)\otimes s''^{[1]}$ with $E_2^{01}=K$ as required. Lastly, if $s_1=p-1$, it is easy to check that $s_1\otimes 1$ has Loewy length 3 with composition factors $p-2$, $p-2$ and $1^{[1]}$ and so either $r_1=p-2$ or $r_1=0$. If $r_1=0$, then $r'\otimes 1$ is irreducible and so $\Hom_G(r',1\otimes s')=\Hom_G(r'\otimes 1,s')=0$ since $s'\neq r'\otimes 1$. So $r_1=p-2$. If $p>2$ then $E_2^{01}=\Hom_G(r'\otimes 1,s')=\Hom_G((p-3\oplus p-1)\otimes r''^{[1]},s')$ giving again $r'=(s_1-1)\otimes s''^{[1]}$ and $E_2^{01}=K$ as required. If $p=2$, recall by the linkage principle that $r_0=s_0$ so that $E_2^{01}=\Hom_G((s_1\pm1)\otimes r''^{[1]},s')$, as observed already, giving $r'=(s_1\pm 1)\otimes s''^{[1]}$ and $E_2^{01}=K$.
\end{proof}

We extend this method to calculate $H^2(G,V)$. We will now use the spectral sequence $E_2^{nm}$ applied to $V=r$. We still have the conclusion of \ref{23page} from the specialisation of \ref{23page} to $s=K$, the trivial module.

\begin{prop}\label{abut} $E_2^{nm}=E_3^{nm}=\dots=E_\infty^{nm}$ for all $n+m<p$.\end{prop}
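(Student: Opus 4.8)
The plan is to show that in the range $n+m<p$ every differential with an end at $E_*^{nm}$ is forced to vanish, organising everything around the shape of the $E_2$-page. First I would record that shape: since $G_1\triangleleft G$ and $r=r_0\otimes r'^{[1]}$, the tensor identity gives $H^m(G_1,r)^{[-1]}\cong H^m(G_1,r_0)^{[-1]}\otimes r'$, so Proposition \ref{cfk} shows this is $0$ unless $r_0\in\{0,p-2\}$ --- in which case the spectral sequence is identically zero and there is nothing to prove --- while otherwise $E_2^{nm}=H^n(G,H^0(m)\otimes r')$ for $m$ of the appropriate parity, and $E_2^{nm}=0$ for $m$ of the other parity (even when $r_0=0$, odd when $r_0=p-2$). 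Thus every other row of the $E_2$-page vanishes; I will assume $r_0=0$, the case $r_0=p-2$ being identical.

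The next step is parity pruning. The differential $d_2$ shifts $m$ by $-1$, changing its parity, so one of its ends always lies in a zero row; hence $d_2=0$ and $E_2^{nm}=E_3^{nm}$ everywhere, exactly as in Corollary \ref{cor}. An $E_2$-entry equals its $E_\infty$-value as soon as every $d_i$ with an end at that position is zero, and any $d_i$ with an end at a position of total degree $<p$ has its source at total degree $\leq p-1$; so it suffices to show that every $d_i$ with $i\geq3$ and source of total degree $\leq p-1$ vanishes. Since $d_i$ shifts $m$ by $-(i-1)$, parity again kills all such $d_i$ with $i$ even (the target then lies in a zero row), leaving only the odd pages $i=3,5,\dots$.

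Now consider $d_i\colon E_i^{ab}\to E_i^{a+i,b-i+1}$ with $i\geq3$ odd and $a+b\leq p-1$, so $b\geq i-1\geq2$ and $b<p$. The role of the hypothesis is that $H^0(b)=L(b)$ is restricted, as is the bottom layer $r_1$ of $r'=r_1\otimes r''^{[1]}$, with $L(r_1)=H^0(r_1)$; hence $H^0(b)\otimes r'=\bigl(H^0(b)\otimes H^0(r_1)\bigr)\otimes r''^{[1]}$, where the first tensor factor has a good filtration by Clebsch--Gordan, with sections $H^0(b+r_1),H^0(b+r_1-2),\dots,H^0(|b-r_1|)$. When $r'=r_1$ is itself restricted this already finishes the proof: each such section is $H^0(\nu)$ with $\nu\geq0$, so $H^n(G,H^0(b)\otimes r')=0$ for $n\geq1$, the spectral sequence is concentrated in the row $n=0$, and every $d_i$ with $i\geq2$ --- which lands in the row $n=i\geq2$ --- is zero. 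For general $r'$ I would peel off the layers $r_1,r_2,\dots$ in turn and induct on the number of Frobenius twists, using that the twisted factor $r''^{[1]}$ contributes $G$-cohomology only in degrees large relative to its level, so that within total degree $<p$ one only sees the good-filtration part $H^0(b)\otimes H^0(r_1)$; the conclusion one wants is that whenever the source $E_i^{ab}$ of a live differential is nonzero its target vanishes. (The edge case $a=0$ is already transparent: $E_2^{0b}=\Hom_G(V(b),r')$ is nonzero only if $r'\cong L(b)$, and then the target $H^i(G,H^0(b-i+1)\otimes H^0(b))$ is the cohomology of a module with a good filtration, hence zero.)

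I expect the last step to be the main obstacle: making the twist-by-twist induction precise, and pinning down which groups $H^n(G,H^0(\nu)\otimes r''^{[1]})$ can be nonzero for $n<p$, requires the linkage principle together with careful bookkeeping of good filtrations, and is least transparent near the top of the range, where a live differential can target total degree $p$ itself. For the application to Theorem 1 only total degree $\leq2$ is needed --- which forces $p\geq3$ --- and there the parity step leaves a single nonformal differential $d_3\colon E_3^{02}\to E_3^{30}$: its source $\Hom_G(V(2),r')$ is nonzero only if $r'\cong L(2)$, in which case its target is $H^3(G,r')=H^3(G,H^0(2))$, which vanishes because $H^0(2)$ has a good filtration; so $E_2^{nm}=E_\infty^{nm}$ for $n+m\leq2$.
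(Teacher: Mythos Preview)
Your parity reductions are correct and match the paper: every other row of $E_2$ vanishes, so $d_2=0$ and more generally $d_i=0$ for all even $i$, since such $d_i$ changes the parity of the second index. The gap is in your treatment of odd $i\geq 3$. Your Clebsch--Gordan/good-filtration idea cleanly handles only the case where $r'$ is restricted, and you yourself flag that the ``peel off the twists'' induction for general $r'$ is not actually carried out; as stated it is not a proof.

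The paper bypasses this entirely with a short linkage argument you have overlooked. In the range under consideration one has $0\leq m<p$, and likewise $0\leq m-i+1<p$ and $0\leq m+i-1<p$ whenever the corresponding term lies in the first quadrant, so $H^0(m)$, $H^0(m-i+1)$, $H^0(m+i-1)$ are all irreducible. Hence $E_2^{nm}=\Ext_G^n(L(m),r')$, and if this is nonzero the linkage principle forces $r_1\in\{m,\ p-2-m\}$. The same reasoning applied to the target $E_2^{n+i,\,m-i+1}$ gives $r_1\in\{m-i+1,\ p-2-(m-i+1)\}$. Writing $i-1=2i'$ with $i'\geq 1$, the four possible coincidences are immediately ruled out: the ``mixed'' pairings force $p$ even, and the ``matching'' pairings force $i'=0$. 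So source and target of $d_i$ cannot both be nonzero, and $d_i=0$. The incoming differential from $E_2^{n-i,\,m+i-1}$ is handled identically.

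This is both shorter and more robust than trying to control $H^n(G,H^0(\nu)\otimes r''^{[1]})$ through an induction on twists. Your separate verification for $n+m\leq 2$ is correct but becomes unnecessary once the linkage argument is in place.
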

\begin{proof}Observe that $H^0(m)=L(m)$ is irreducible for all $m$ under consideration. 

Notice that we have already proved the proposition for $p=2$ in \ref{cor}, so assume $p>2$.

Assume inductively that we have the result for $E_2,\dots,E_i$, i.e. $E_2^{nm}=E_3^{nm}=\dots=E_i^{nm}$ for all $n+m<p$. The maps in $E_i$ are $E_i^{n-i,m+i-1}\to E_i^{nm}\to E_i^{n+i,m-i+1}$. We show that \[E_2^{nm}=E_i^{nm}\neq 0\text{ implies }E_2^{n-i,m+i-1}=E_2^{n+i,m-i+1}=0.\hspace{40pt}(*)\] Since $E_i^{n'm'}$ is a section of $E_2^{n'm'}=0$ for any $n,'m'$, $(*)$ will give $E_{i+1}^{nm}$ as the cohomology of the sequence $0\to E_2^{nm}\to 0$, hence $E_{i+1}^{nm}=E_2^{nm}$ as required.

Firstly if $i$ is even, then $(*)$ holds by \ref{23page}, noting also that $E_2^{10}=E_\infty^{10}$ for any such spectral sequence.

We wish to show that for the sequence $E_i^{n-i,m+i-1}\to E_i^{nm}\to E_i^{n+i,m-i+1}$, the second upper index for each non-zero term represents a restricted weight for $G$; i.e. $m+i-1$, $m$, $m-i+1<p$, when the corresponding term is non-zero.

Since $n+m<p$ and $n\geq 0$ we have $m<p$. Also since $i>1$, $m-i+1<p$. Now if $E_i^{n-i,m+i-1}\neq 0$, then $n-i\geq 0$ so if $m+i-1>p-1$, then $n-i+m+i-1>p-1$ so $n+m>p$, which is not true. So $m<p$, $m+i-1<p$ and $m-i+1<p$ as required.  

Now let $i$ be odd and assume $E_2^{nm}\neq 0$. Then $H^m(G_1,r)^{[-1]}$ is non-zero, and so it is equal to $H^0(m)\otimes r'=m\otimes r'$. Thus $E_2^{nm}=H^n(G,H^m(G_1,r)^{[-1]})=\Ext_G^n(m,r')\neq 0$ and so $r'$ is linked to $m$; so $r_1=m$ or $r_1=p-2-m$. If $E_2^{n+i,m-i+1}\neq 0$ then $\Ext_G^{n+i}(m-i+1,r')\neq 0$ and $r'$ is linked to $m-i+1$ as well. But $m-i+1=m-2i'$ where $i':=(i-1)/2\in \mathbb N$ as $i$ is odd. So $r_1=m-2i'$ or $r_1=p-2-m+2i'$. Either of the conditions (a): $r_1=m-2i'$ or (b): $r_1=p-2-m+2i'$ together with either of the conditions (c): $r_1=m$ or (d): $r_1=p-2-m$ produce contradictions: (a) and (d) or (b) and (c) imply that $p$ is even, a contradiction, while (a) and (c) or (b) and (d) imply that $i'>0$, another contradiction. So we cannot have both $E_2^{nm}$ and $E_2^{n+i,m-i+1}$ non-zero. Similarly $E_2^{n-i,m+i-1}$ and $E_2^{nm}$ are not both non-zero. So $(*)$ holds again.\end{proof}

Using (\ref{sumup}), we get

\begin{corollary}\label{hi}For $p>i$, $H^i(G,r)=E_2^{0i}\oplus E_2^{1,i-1}\oplus\dots\oplus E_2^{i0}$.\end{corollary}

\begin{prop}\label{h2}For any $p$,  $H^2(G,r)=E_2^{02}\oplus E_2^{11}\oplus E_2^{02}$.\end{prop}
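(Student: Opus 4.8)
The plan is to start from $H^2(G,r)=E_\infty^{02}\oplus E_\infty^{11}\oplus E_\infty^{20}$, which is (\ref{sumup}) in degree $2$, and then identify each $E_\infty$-term with the corresponding $E_2$-term. When $p>2$ this is immediate: $0+2<p$, so Corollary \ref{hi} with $i=2$ (equivalently Proposition \ref{abut}) already gives it. Hence the proposition really only concerns $p=2$, and there Corollary \ref{cor}, read for $V=r$ (i.e.\ with $s=K$), supplies $E_2^{20}=E_\infty^{20}$ and $E_2^{11}=E_\infty^{11}$ at once. So the task reduces to showing $E_2^{02}=E_\infty^{02}$ for $p=2$.

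The next step is to pin down the $02$-term. With $r=r_0\otimes r'^{[1]}$ one has $H^2(G_1,r)=\Ext^2_{G_1}(K,r_0)\otimes r'^{[1]}$; the injectivity of $St_1$ over $G_1$ (the content of the proof of Lemma \ref{steinberg}) disposes of $r_0=p-1$, and for $r_0=0$ part~$(i)$ of Proposition \ref{cfk} gives $H^2(G_1,r)^{[-1]}\cong H^0(2)\otimes r'$, so that $E_2^{02}\cong\Hom_G(r',H^0(2))$ using the self-duality of $r'$. Since $\mathrm{Soc}_G H^0(2)=L(2)$ is simple, this is nonzero exactly when $r'=L(2)$, that is exactly for the single module $r=L(4)=L(1)^{[2]}$, and then $E_2^{02}=K$; for all other $r$ the desired equality is vacuous. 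So only $r=L(4)$ needs attention. There the differentials leaving $E_i^{02}$ for $i\ge 2$ are $d_2\colon E_2^{02}\to E_2^{21}$ and $d_3\colon E_3^{02}\to E_3^{30}$ (later ones have target outside the first quadrant), and none enters $E_i^{02}$; Proposition \ref{23page} forces all odd-row terms to vanish once $E_2^{02}\ne 0$, so $E_2^{21}=0$ (whence $E_3^{02}=E_2^{02}$) and $E_2^{11}=0$ (whence $E_3^{30}=E_2^{30}$).

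The one substantive step, and the place where the real work lies, is to show this surviving target vanishes, i.e.\ $E_2^{30}=0$ for $r=L(4)$. As $G_1$ acts trivially on $L(1)^{[2]}$, one computes $E_2^{30}=H^3(G,H^0(G_1,L(1)^{[2]})^{[-1]})=H^3(G,L(1)^{[1]})$, so it is enough to see $H^3(G,L(1)^{[1]})=0$. For this I would use the short exact sequence $0\to L(1)^{[1]}\to H^0(2)\to K\to 0$, which for $p=2$ exhibits the three-dimensional module $H^0(2)$ as an extension of $K$ by its simple socle $L(1)^{[1]}$: its long exact cohomology sequence, combined with $H^{\ge 1}(G,H^0(2))=0$ (higher $G$-cohomology vanishes on modules with a good filtration, cf.\ \cite{J}) and $H^{\ge 1}(G,K)=0$, yields $H^n(G,L(1)^{[1]})=0$ for every $n\ge 2$. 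Therefore $E_3^{30}=0$, the differential $d_3$ is forced to be zero, and $E_\infty^{02}=E_4^{02}=E_3^{02}=E_2^{02}$, completing the $p=2$ case and the proposition. Apart from this last vanishing, everything is just bookkeeping with the parity constraints of Proposition \ref{23page}; the one mild subtlety is that $d_3\colon E_3^{02}\to E_3^{30}$ is the only relevant differential whose source and target both lie in even rows, so parity alone cannot rule it out and the exceptional module $L(4)$ must be examined by hand.
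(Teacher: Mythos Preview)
Your proof is correct and follows essentially the same route as the paper: for $p>2$ invoke Corollary~\ref{hi}, and for $p=2$ use Corollary~\ref{cor} to reduce to $E_2^{02}=E_\infty^{02}$, identify the exceptional module as $r=L(4)$, and then kill the only remaining differential by showing $E_2^{30}=H^3(G,L(1)^{[1]})=0$. The sole difference is cosmetic: the paper obtains this last vanishing by citing \cite[II.4.14, II.4.11]{J}, whereas you spell out the same thing via the long exact sequence of $0\to L(1)^{[1]}\to H^0(2)\to K\to 0$ together with Kempf vanishing and $H^{\ge 1}(G,K)=0$.
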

\begin{proof}The corollary above gives the result for $p>2$. If $p=2$, from \ref{cor}, we have $E_2^{20}=E_\infty^{20}$ and $E_2^{11}=E_\infty^{11}$ so by (\ref{sumup}) we need only prove that $E_2^{02}=E_\infty^{02}.$ We already have $E_2^{02}=E_3^{02}$ from \ref{cor} and so we only need to show that $E_3^{02}=E_4^{02}$ as $E_4^{02}=E_\infty^{02}$. The maps through $E_3^{02}$ in $E_3$ are
\[0\to E_3^{02}\to E_3^{30}.\]
Assume $E_3^{02}$ is non-zero. This implies that \[H^0(G,H^2(G_1,r_0)^{[-1]}\otimes r')\neq 0\]
so $r_0=0$ and $H^0(G,H^0(2)\otimes r')\neq 0$ by \ref{cfk}. But $H^0(2)$ is an indecomposable module $0/1^{[1]}$, i.e. it has socle $1^{[1]}$ and quotient a trivial module. So $\Hom_G(1^{[1]}/0,r')\neq 0$.
This means $r'$ is an irreducible quotient of $1^{[1]}/0$ giving $r'=1^{[1]}$. Now \begin{align*}E_2^{30}&=H^3(G,H^0(G_1,r_0)^{[-1]}\otimes r')\\
 &=H^3(G,1^{[1]})\\&=H^2(G,H^0(2)/\text{soc}_G H^0(2))\text{ by \cite[II.4.14]{J}}\\&=H^2(G,K)\\&=0\text{ by \cite[II.4.11]{J}}.\end{align*}
 Hence $E_4^{02}$ is the cohomology of the sequence $0\to E_3^{02}\to 0$ as required.
\end{proof}

\begin{lemma}\label{lem}For any irreducible $G$-module $r'$, \[H^1(G,1\otimes r')=\begin{cases}K\text{ if }r'= (p-3)\otimes 1^{[1]}\ (p>2)\\
K\text{ if }r'=1\otimes (p-2)^{[e]}\otimes 1^{[e+1]}\ (e>0)\\
0\text{ otherwise.}\end{cases}\]\end{lemma}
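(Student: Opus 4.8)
The plan is to identify $H^1(G,1\otimes r')$ with a single $\Ext^1$ between irreducibles and then read off the answer from Cline's formula, Proposition~\ref{ext1}, by a short bookkeeping argument on $p$-adic digits. First I would rewrite $H^1(G,1\otimes r')=\Ext^1_G(K,1\otimes r')\cong\Ext^1_G(1^*,r')$, the second isomorphism being the tensor--Hom adjunction for the finite-dimensional module $1$. Since $w_0=-1$ for $G=SL_2$, every irreducible $G$-module is self-dual; in particular $1^*\cong 1$, so $H^1(G,1\otimes r')\cong\Ext^1_G(1,r')$.

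Next I would feed $r=1=L(1)$, whose $p$-adic digits are $r_0=1$ and $r_i=0$ for $i\geq 1$, into Proposition~\ref{ext1}, with $s=r'=s_0+ps_1+\cdots$. That proposition gives $\Ext^1_G(1,r')=K$ exactly when there is an index $k$ with $r_i=s_i$ for $i\neq k,k+1$, with $r_k=p-2-s_k$, and with $r_{k+1}=s_{k+1}\pm 1$, and $\Ext^1_G(1,r')=0$ otherwise. I would then split on $k$. For $k=0$ the constraints read $s_0=p-3$ (forcing $p>2$, with $s_0=0$ when $p=3$), $s_1=1$ (the only admissible solution of $0=s_1\pm 1$, as $s_1=-1$ is not a weight), and $s_i=0$ for $i\geq 2$; hence $r'=(p-3)\otimes 1^{[1]}$. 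For $k\geq 1$ the constraints read $s_0=r_0=1$, $s_i=0$ for $1\leq i\leq k-1$, $s_k=p-2$ (from $0=p-2-s_k$), $s_{k+1}=1$, and $s_i=0$ for $i\geq k+2$; hence $r'=1\otimes (p-2)^{[k]}\otimes 1^{[k+1]}$ with $e:=k\geq 1$. These two families are disjoint ($s_0=p-3\neq 1$ since $p$ is prime in the first, $s_0=1$ in the second), and within the second family distinct $k$ give distinct $r'$; for every $r'$ in neither family no such $k$ exists, so $\Ext^1_G(1,r')=0$. This is precisely the stated trichotomy.

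Finally I would record that the small-characteristic specialisations are already subsumed: for $p=3$ the first line reads $r'=0\otimes 1^{[1]}=1^{[1]}$, and for $p=2$ the first line is vacuous while the second reads $r'=1\otimes 0^{[e]}\otimes 1^{[e+1]}=1\otimes 1^{[e+1]}$, in agreement with Proposition~\ref{ext1} there (where $r_k=p-2-s_k$ is the degenerate linkage condition $r_k=s_k=0$). The only point needing care is the exhaustiveness and disjointness of the case analysis --- in particular that $r_{k+1}=s_{k+1}\pm 1=0$ can be met only by $s_{k+1}=1$ --- but this is elementary once the reduction to Proposition~\ref{ext1} is in place, so I expect no substantive obstacle beyond that bookkeeping. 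One could instead bypass Cline's formula and argue directly with the Lyndon--Hochschild--Serre spectral sequence of Proposition~\ref{lhs} applied to $V=1\otimes r'$, but invoking Proposition~\ref{ext1} is far shorter.
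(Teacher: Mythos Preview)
Your proposal is correct and follows exactly the route the paper takes: the paper's proof is the single line ``Observe that $H^1(G,1\otimes s)=\Ext_G^1(1,s)$ and so this is an easy check using \ref{ext1},'' and your argument simply unpacks that easy check in full detail. The digit-by-digit case analysis you give is precisely what one has to do to read off the answer from Proposition~\ref{ext1}.
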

\begin{proof} Observe that $H^1(G,1\otimes s)=\Ext_G^1(1,s)$ and so this is an easy check using  \ref{ext1}.\end{proof}

We can now finish the

{\it Proof of Theorem 1:} 

Let $V=r=s^{[d]}$ with $s_0\neq 0$.
We have from \ref{h2} that $H^2(G,V)=E_2^{20}\oplus E_2^{11}\oplus E_2^{02}$. Assume $H^2(G,V)\neq 0$.

Let $p>2$ initially. If $d=0$, then by the linkage principle for $G$, $r_0=p-2$, and $E_2^{02}=E_2^{20}=0$ by \ref{23page}. So $H^2(G,r)=E_2^{11}$ and the latter is $H^1(G,H^1(G_1,p-2)^{[-1]}\otimes r')=H^1(G,1\otimes r')$ using \ref{cfk}. Hence $r'$ is as listed in \ref{lem}, and so $r=(p-2)\otimes r'^{[1]}=(p-2)\otimes (p-3)^{[1]}\otimes 1^{[2]}$ or $(p-2)\otimes 1^{[1]}\otimes (p-2)^{[e+1]}\otimes 1^{[e+2]}$. In these cases, $H^2(G,r)=K$.

If $d>0$ then $r_0=0$ and so $E_2^{11}=0$ by \ref{23page} again. Assume $E_2^{02}\neq 0$. Now  $E_2^{02}=H^0(G,H^0(2)\otimes s'^{[d]})=\Hom_G(2,s^{[d-1]})$. So $d=1$, $s=2$ and $r=2^{[1]}$, giving $E_2^{02}=K$. Now, $E_2^{20}=H^2(G,H^0(G_1,K)^{[-1]}\otimes r')=H^2(G,r')$ again using \ref{cfk}. But this is $0$ if $r'=2$ by the linkage principle for $G$, so $H^2(G,2^{[1]})=E_2^{02}=K$. If $r\neq 2^{[1]}$ then $E_2^{02}=0$ and so $H^2(G,s^{[d]})=E^{20}_2=H^2(G,s^{[d-1]})$. Hence arguing by induction on $d$, we have shown that if $H^2(G,V)\neq 0$ and $p>2$ then $V$ is some Frobenius twist of $2^{[1]}$, $(p-2)\otimes (p-3)^{[1]}\otimes 1^{[2]}$ or $(p-2)\otimes 1^{[1]}\otimes (p-2)^{[e+1]}\otimes 1^{[e+2]}$ and $H^2(G,V)=K$. This completes the proof for $p>2$.

Now assume that $p=2$. Then $r_0=0$. If $r_1=1$ then $E_2^{02}=E_2^{20}=0$ by \ref{23page} and so $H^2(G,V)=E_2^{11}=H^1(G,1\otimes r')$. Hence from \ref{lem} we get $r'=1\otimes 1^{[e+1]}$ with $e>0$. This gives $r=1^{[1]}\otimes 1^{[e+2]}$ and $H^2(G,V)=K$. If $r_1=0$ then by \ref{23page}, $E_2^{11}=0$. The same argument as above gives that if $E_2^{02}\neq 0$ then $E_2^{20}=0$, $d=2$, $r=s^{[d]}=1^{[2]}$ and $H^2(G,r)=K$. On the other hand if $E_2^{02}=0$ and $E_2^{20}\neq 0$, then $H^2(G,s^{[d]})=H^2(G,s^{[d-1]})$. As before, inductively we now see that $V$ is a Frobenius twist of $1^{[2]}$ or $1^{[1]}\otimes 1^{[e+2]}$ and $H^2(G,V)=K$.

{\it Proof of Corollary 2}

We briefly point out that one can combine \ref{ext1} and Theorem 1 to deduce Corollary 2. Let $V=L(s)^{[d]}$ with $d\geq 0$ and assume $p>2$ or $r\neq 2p$. Setting $r=0$ in \ref{ext1} the modules $V$ with $H^1(G,V)\neq 0$ are those with $s=p-2\otimes 1^{[1]}$. Now we may use \ref{ext1} again to see that the non-trivial modules which extend $V$ are 
\begin{align*}&2^{[d+1]} \text{ (if $k=d$ in \ref{ext1})},\\
&((p-2)\otimes (p-3)^{[1]}\otimes 1^{[2]})^{[d]}\text{ (if }k=d+1)\\
&((p-2)\otimes (p-2\pm1)^{[1]}\otimes 1^{[2]})^{[d-1]} \text{ (if }k=d-1),\\
&((p-2)^{[k]}\otimes 1^{[k+1]}\otimes (p-2)^{[d]}\otimes (p-2)^{[d+1]}),\text{ if }k\neq d,d\pm 1\end{align*}

Each irreducible $G$-module $V$ with $H^2(G,V)\neq 0$ appears above with the exception of $2^{[d]}=1^{[d=1]}$ when $p=2$ so the corollary follows.

{\footnotesize

\end{document}